\newcommand{\yref}{y_{\mbox{\scriptsize\textit{ref}}}}
\newcommand{\wdist}{w_{\mbox{\scriptsize\textit{dist}}}}
\newcommand{\citel}[2]{\cite[#2]{#1}}
\newcommand{\Acomp}{A_s}
\newcommand{\Bcomp}{B_s}
\newcommand{\Qcomp}{Q_s}
\newenvironment{ORP}{\textbf{The Output Regulation Problem.}\it}{}
\newtheorem{theorem}{Theorem}[section]
\newtheorem{lemma}[theorem]{Lemma}
\theoremstyle{definition}
\newcommand{\pmat}[1]{\begin{bmatrix}#1 \end{bmatrix}}
\newcommand{\pmatsmall}[1]{\begin{bsmallmatrix}#1\end{bsmallmatrix}}
\newcommand{\eps}{\varepsilon}
\DeclareMathOperator{\re}{Re}
\DeclareMathOperator{\im}{Im}
\DeclareMathOperator{\Span}{span}
\DeclareMathOperator{\diag}{diag}
\newcommand{\T}{\mathbb{T}}
\newcommand*{\C}{{\mathbb{C}}}     
\newcommand*{\R}{{\mathbb{R}}}
\newcommand*{\N}{{\mathbb{N}}}
\newcommand*{\Lin}{{\mathcal{L}}}
\newcommand*{\abs} [1]{\lvert#1\rvert}
\newcommand*{\norm}[1]{\lVert#1\rVert}
\newcommand*{\set} [1]{\{#1\}}
\newcommand*{\setm}[2]{\{\,#1\mid#2\,\}}   
\newcommand*{\iprod}[2]{\langle#1,#2\rangle}
\newcommand*{\Lp}[1][p]{L^{#1}}
\newcommand{\Abs}[2][default]{\ifthenelse{\equal{#1}{default}}{\left\lvert#2\right\rvert}{\ldelim{#1}{\lvert}#2\rdelim{#1}{\rvert}}}
\newcommand{\Norm}[2][default]{\ifthenelse{\equal{#1}{default}}{\left\lVert#2\right\rVert}{\ldelim{#1}{\lVert}#2\rdelim{#1}{\rVert}}}
\newcommand*{\Iprod}[3][default]{\ifthenelse{\equal{#1}{default}}{\left\langle#2,#3\right\rangle}{\ldelim{#1}{\langle}#2,#3\rdelim{#1}{\rangle}}}
\newcommand*{\Dualpair}[3][default]{\ifthenelse{\equal{#1}{default}}{\left\langle#2,#3\right\rangle}{\ldelim{#1}{\langle}#2,#3\rdelim{#1}{\rangle}}}
\newcommand*{\List}[2][1]{\set{#1,\ldots,#2}}
\newcommand{\eq}[1]{\begin{align*}#1\end{align*}}
\newcommand{\eqn}[1]{\begin{align}#1\end{align}}
\newcommand{\ga}{\alpha}
\newcommand{\gb}{\beta}
\newcommand{\gl}{\lambda}
\newcommand{\gw}{\omega}
\newcommand{\inv}{^{-1}}
\newcommand*{\ddb}[2][1]{\ifthenelse{\equal{#1}{1}}{\frac{d}{d#2}}{\frac{d^{#1}}{d#2^{#1}}}}
\newcommand*{\pd}[3][1]{\ifthenelse{\equal{#1}{1}}{\frac{\partial{#2}}{\partial{#3}}}{\frac{\partial^{#1}{#2}}{\partial#3^{#1}}}}
\newcommand*{\keyterm}[1]{\emph{#1}}
\begin{document}

\title[Output Tracking of a Kelvin--Voigt Beam]{A Reduced Order Controller for Output Tracking of a Kelvin--Voigt Beam}

\thispagestyle{plain}

\author[L. Paunonen]{Lassi Paunonen}
\address[L. Paunonen]{Mathematics, Faculty of Information Technology and Communication Sciences, Tampere University, PO.\ Box 692, 33101 Tampere, Finland}
\email{lassi.paunonen@tuni.fi}

\author[D. Phan]{Duy Phan}
\address[D. Phan]{Institut f\"{u}r Mathematik, Universit\"at Innsbruck,
Technikerstra{\ss}e 13, 6020 Innsbruck, Austria}
\email{duy.phan-duc@uibk.ac.at}

\thanks{This work was supported by the Academy of Finland Grants number 298182 and 310489.}

\begin{abstract}
We study output tracking and disturbance rejection for an Euler-Bernoulli beam with Kelvin-Voigt damping. The system has distributed control and pointwise observation. As our main result we design a finite-dimensional low-order internal model based controller that is based on a spectral Galerkin method and model reduction by Balanced Truncation. The performance of the designed controller is demonstrated with numerical simulations and compared to the performance of a low-gain internal model based controller.
\end{abstract}

\subjclass[2020]{%
%%Primary (Secondary)
93C05, %Linear systems
93B52, %Feedback control
35K90 % PDEs -> Abstract parabolic equations
(93B28)%Operator-theoretic methods 
}
\keywords{PDE control, Euler--Bernoulli beam equation, output tracking, disturbance rejection, robust output regulation, distributed parameter system.} 

\maketitle

\section{Introduction}

In this paper we design a finite-dimensional error feedback controller for output tracking and disturbance rejection of an Euler--Bernoulli beam equation with Kelvin-Voigt damping on $\Omega=(-1,1)$~\citel{ItoMor98}{Sec.~3},
\begin{subequations}
    \label{eq:EBmodel}
  \eqn{
\MoveEqLeft[5] v_{tt}(\xi,t)
+\left( EI v_{\xi\xi}+d_{KV}I v_{\xi\xi t} \right)_{\xi\xi}(\xi,t)  + d_v v_t(\xi,t)\\
& = 
b_1(\xi)u_1(t)+b_2(\xi)u_2(t)
+ B_{d0}(\xi) \wdist(t) 
    \\
     v(-1,t) &= v_\xi(-1,t)= 0,
    \\
    v(1,t) &=v_\xi(1,t)=0\\
     v(\xi,0) &=v_0(\xi), \qquad v_t(\xi,0)=v_1(\xi),\\
     y(t) &= (v(\xi_1,t),v(\xi_2,t))^T.
  }
\end{subequations}
The parameters $E>0$ and $I>0$ are the (constant) elastic modulus and 
the second moment of area, respectively,
and $d_{KV}>0$ and $d_v\geq 0$ are the coefficient associated to the Kelvin--Voigt damping and viscous damping, respectively.
The beam is assumed to have constant density $\rho=1$.
The system has two inputs $u(t)=(u_1(t),u_2(t))^T$ and two outputs $y(t)=(y_1(t),y_2(t))^T$ (see Section~\ref{sec:ORP} for details).

We study \keyterm{output tracking} and \keyterm{disturbance rejection},
where the aim is to design a control law in such a way that 
the output $y(t)$ of~\eqref{eq:EBmodel} converges to a given reference signal $\yref(t)$, i.e., $\norm{y(t)-\yref(t)}\to 0$ as $t\to\infty$, despite the external disturbance signals $\wdist(t)$.
The considered signals $\yref:[0,\infty)\to \R^p$ and
$\wdist:[0,\infty)\to \R^{n_d}$ are of the form
\begin{subequations}
  \label{eq:yrefwdist}
  \eqn{
    \hspace{-1ex}  \yref(t) &= a_0^1 + \sum_{k=1}^q (a_k^1 \cos(\gw_k t) + b_k^1 \sin(\gw_k t))\\
  \label{eq:yrefwdistwd}
    \hspace{-1ex} \wdist(t) &= a_0^2 + \sum_{k=1}^q (a_k^2 \cos(\gw_k t) + b_k^2 \sin(\gw_k t))
  }
\end{subequations}
for some known frequencies $\set{\gw_k}_{k=0}^q\subset \R$ with $0=\gw_0<\gw_1<\ldots<\gw_q$ and possibly \keyterm{unknown} constants $\set{a_k^j}_{k,j}\subset \R$ and $\set{b_k^j}_{k,j}\subset \R$ (any of the constants are allowed to be zero). 
This control problem --- typically called \keyterm{output regulation} --- has been studied extensively in the literature for controlled partial differential equations~\cite{XuSal14,Deu15,XuDubCPDE16,JinGuo19}
and distributed parameter systems~\cite{Poh81a,HamPoh00,ByrLau00,RebWei03,Imm06a,Imm07a,NatGil14,Pau16a}.

In this paper we solve the output tracking and disturbance rejection problem with a finite-dimensional dynamic error feedback controller introduced recently in~\cite{PauPha20}. The controller design is based on Galerkin approximation theory for a class of linear systems~\cite{BanKun84,Mor94}, in particular including controlled parabolic PDEs, and it uses model reduction to
reduce the dimension of the controller\footnote{Note that the Internal Model Principle~\cite{PauPoh10} requires a controller solving the control problem for all $\yref(t)$ and $\wdist(t)$ in~\eqref{eq:yrefwdist} necessarily has dimension of at least ``number of outputs $\times$ number of (complex) frequencies'', i.e., $2(2q+1)$. Thus in our setting the controller having ``low order'' means that the dimension is not much higher than $2(2q+1)$.}.
Output regulation of an Euler--Bernoulli beam with Kelvin--Voigt damping (single-input-single-output with clamped--free boundary conditions) was also considered in~\citel{PauPha20}{Sec.~V.C}, where the controller was based on the Finite Element Method. As the main novelty of this paper we instead base our controller on a \keyterm{spectral Galerkin method} with non-local basis functions based on Chebyshev polynomials~\cite{She95,SheTan11book}.

The motivation for this study comes from the fact that spectral methods are powerful numerical approximation  tools --- typically achieving great accuracy with low numbers of basis functions --- but to our knowledge they have not been used previously in controller design for output regulation of PDEs.
In addition, in the case of Chebyshev functions
the \textbf{Chebfun} MATLAB library (available at \url{https://www.chebfun.org/})~\cite{DriHal14book,Tre13book} can be employed in computing
the Galerkin approximation.

Finally, we compare the performance of our controller to the so-called ``simple'' internal model based controller, and demonstrate that our new controller achieves a considerably improved rate of convergence of the output tracking.

\subsection*{Acknowledgement}

The authors are very grateful to the anonymous reviewer who 
pointed out the reference~\cite{Mad90} and 
provided a detailed 
 proof of Lemma~\ref{lem:FormBddCoercive}. The boundedness and coercivity of $a(\cdot,\cdot)$ were left as open questions in the original version of the manuscript and the reviewer helped us very much in completing this work. 
The authors also thank Konsta Huhtala for helpful discussions and for carefully examining the manuscript.

\section{The Output Regulation Problem}
\label{sec:ORP}

In~\eqref{eq:EBmodel} the functions
$b_1(\cdot),b_2(\cdot)\in C^1(-1,1;\R)$
are fixed input profiles satisfying $b_j(\pm 1)=b_j'(\pm 1)=0$, $j=1,2$, 
and the disturbance input term has the form $B_{d0}(\xi)\wdist(t) = \sum_{k=1}^{n_d}b_{dk}(\xi)\wdist^k(t)$ for $\wdist(t)=(\wdist^k(t))_{k=1}^{n_d}$ and for some fixed but unknown profile functions $b_{dk}(\cdot)\in C^1(-1,1;\R)$ with $b_{dk}(\pm 1)=b_{dk}'(\pm 1)=0$ for $k\in \List{n_d}$\footnote{Theory would allow weaker assumptions on $b_j$ and $b_{dk}$, but spectral methods do not work well for discontinuous functions.}.

\noindent\begin{ORP}
  Design a dynamic error feedback controller such that the following hold.
  \begin{itemize}
    \item[\textup{(1)}] The closed-loop system is exponentially stable.
    \item[\textup{(2)}] For any initial values of the system~\eqref{eq:EBmodel} and the controller and for any $\set{a_k^j}_{k,j}\subset \R$ and $\set{b_k^j}_{k,j}\subset \R$ 
      \eq{
	\norm{y(t)-\yref(t)}\to 0
      } 
      at a uniform exponential rate as $t\to\infty$.
  \end{itemize}
\end{ORP}

As shown in~\cite{PauPha20}, the controller constructed in this paper is also \keyterm{robust} in the sense that it tolerates changes and uncertainty in some of the parameters of the system~\eqref{eq:EBmodel} (see~\cite{PauPha20} for details).

\section{Reduced Order Internal Model Based Controller Design}

The controller we design is the finite-dimensional ``Observer-based robust controller'' introduced in~\citel{PauPha20}{Sec.~III.A}. It has the general form
\begin{subequations}
  \label{eq:FinConObs}
  \eqn{
    \hspace{-1ex} \dot{z}_1(t)&= G_1z_1(t) + G_2 e(t)\\
  \label{eq:FinConObs2}
    \hspace{-1ex}\dot{z}_2(t)&= (A_L^r+B_L^rK_2^r)z_2(t) + B_L^r K_1^N z_1(t) -L^r e(t)\\
    \hspace{-1ex}u(t)&= K_1^N z_1(t) + K_2^rz_2(t) 
  }
\end{subequations}
with state $(z_1(t),z_2(t))^T\in Z:= Z_0\times \C^r$ and input $e(t)=y(t)-\yref(t)$.
The matrices $(G_1,G_2,A_L^r,B_L^r,K_1^N,K_2^r,L^r)$ 
are
constructed using
 the algorithm in Section~\ref{sec:ContrAlgorithm}.
Theorem III.1 in~\cite{PauPha20}
 states that if the order $N\in\N$ of the Galerkin approximation in \textbf{Step~2} of the algorithm and the order $r\leq N$ of the Balanced Truncation model reduction in \textbf{Step~4} are sufficiently high, then the controller~\eqref{eq:FinConObs} solves the output regulation problem.

\subsection{The Galerkin Approximation of the Beam Model}
\label{sec:GalerkinApprox}

  The controller design uses a Galerkin approximation $(A^N,B^N,C^N)$ of~\eqref{eq:EBmodel} written as first order system
  \eq{
    \dot{x}(t)&= Ax(t)+Bu(t)+B_d \wdist(t)\\
    y(t)&= Cx(t)
  }
  on a Hilbert space $X$.
We review the construction of $(A^N,B^N,C^N)$ in this section.
  As described in~\cite{PauPha20}, the controller design algorithm requires that the operator $A$ is associated to a coercive sesquilinear $a(\cdot,\cdot)$ form defined on another Hilbert space $V\subset X$. 
  In order to utilise the spectral Galerkin method based on Chebyshev polynomials, we formulate the beam system~\eqref{eq:EBmodel} as a first order system with state $x(t)=(v(\cdot,t),\dot{v}(\cdot,t))^T$ on the space $X=V_0\times \Lp[2]_\gw(-1,1)$ where $\Lp[2]_\gw(-1,1)$ is the $\Lp[2]$-space with the weight $\gw(\xi)=(1-\xi^2)^{-1/2}$ and $V_0=\setm{f\in H_\gw^2(-1,1)}{f(\pm 1)=f'(\pm 1)=0}$. Here also the Sobolev space $H^2_\gw(-1,1)$ is defined with the weight $\gw(\cdot)$. The motivation for the use of the weighted spaces is that the Chebyshev polynomials $T_k(\cdot)$ are orthogonal with respect to the inner product of $\Lp[2]_\gw(-1,1)$, and this property can be leveraged in computing the Galerkin approximation~\cite{She95}.

  Similarly as in~\citel{ItoMor98}{Sec.~3}, letting $\wdist(t)\equiv 0$, taking an inner product $\iprod{\cdot}{\cdot}_\gw$ (the inner product of $\Lp[2]_\gw(-1,1)$) of both sides of~\eqref{eq:EBmodel} with a $\psi_1\in V_0$ and integrating by parts leads to the weak form
  \begin{subequations}
    \label{eq:weakform2ndorder}
    \eqn{
      \MoveEqLeft\iprod{\ddot{v}}{\psi_1}_\gw
      + \iprod{EIv''+d_{KV}I \dot{v}''}{(\gw\psi_1)''}_{\Lp[2]} 
 + d_v\iprod{\dot{v}}{\psi_1}_\gw \\
&= \iprod{b_1u_1+b_2u_2}{\psi_1}_\gw.
    }
  \end{subequations}
  Defining the state of the first order system as $x(t)=(v(\cdot,t),\dot{v}(\cdot,t))$, the above weak form can be written as
  \eq{
    \iprod{\dot{x}(t)}{\psi}_X + a(x(t),\psi)=\iprod{Bu(t)}{\psi}_X, \quad \psi\in V
  }
  on $V=V_0\times V_0$ if we define $Bu= \pmatsmall{0\\ b_1u_1+b_2u_2} $ for $u=(u_1,u_2)^T\in\C^2$ and
  \eq{
    a(\phi,\psi) &= -\iprod{\phi_2}{\psi_1}_{V_0} + \iprod{EI \phi_1''+d_{KV}I \phi_2''}{(\gw \psi_2)''}_{\Lp[2]} 
 + d_v \iprod{\phi_2}{\psi_2}_\gw  
  }
 for $\phi=(\phi_1,\phi_2)^T\in V$, $\psi=(\psi_1,\psi_2)^T\in V$.
By~\citel{Mad90}{Lem.~5.1}
we can define an inner product on $V_0$ by
\eq{
\iprod{\phi_1}{\psi_1}_{V_0} = EI \iprod{\phi_1''}{(\gw \psi_1)''}_{\Lp[2]}, \qquad \phi_1,\psi_1\in V_0,
}
and the norm $\norm{\cdot}_{V_0}$ induced by $\iprod{\cdot}{\cdot}_{V_0}$ is equivalent to the norm on $H_\gw^2(-1,1)$.
The following lemma shows that if $X=V_0\times \Lp[2]_\gw(-1,1)$ and $V=V_0\times V_0$ are equipped with the norms 
  $\norm{(\phi_1,\phi_2)^T}_X^2 = \norm{\phi_1}_{V_0}^2 + \norm{\phi_2}_\gw^2$ and
  $\norm{(\phi_1,\phi_2)^T}_V^2 = \norm{\phi_1}_{V_0}^2 + \norm{\phi_2}_{V_0}^2$, 
then the form $a(\cdot,\cdot)$ is bounded and coercive, and therefore the standing assumptions   in~\cite{PauPha20} are satisfied.
The proof of Lemma~\ref{lem:FormBddCoercive} using~\cite{Mad90} was
given to the authors by 
 the anonymous referee.
\begin{lemma}
\label{lem:FormBddCoercive}
The sesquilinear form 
$a(\cdot,\cdot)$ is bounded and coercive, i.e., there exist $q_1,q_2>0$ and $\gl_0\in\R$ such that
  \eq{
    \abs{a(\phi,\psi)}&\leq q_1 \norm{\phi}_V \norm{\psi}_V, \qquad \qquad \forall \phi,\psi\in V\\
    \re a(\phi,\phi)&\geq q_2 \norm{\phi}^2_V - \gl_0 \norm{\phi}^2_X ~\qquad \forall \phi\in V.
  }
\end{lemma}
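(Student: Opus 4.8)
The plan is to push everything through the $V_0$ inner product furnished by \citel{Mad90}{Lem.~5.1} and then to compute. First I would use the defining identity $\iprod{\phi_1}{\psi_1}_{V_0} = EI\iprod{\phi_1''}{(\gw\psi_1)''}_{\Lp[2]}$ to rewrite the middle term of $a(\phi,\psi)$. Since $EI\iprod{\phi_1''}{(\gw\psi_2)''}_{\Lp[2]} = \iprod{\phi_1}{\psi_2}_{V_0}$ and $d_{KV}I\iprod{\phi_2''}{(\gw\psi_2)''}_{\Lp[2]} = (d_{KV}/E)\iprod{\phi_2}{\psi_2}_{V_0}$, the form collapses to
\eq{
a(\phi,\psi) = -\iprod{\phi_2}{\psi_1}_{V_0} + \iprod{\phi_1}{\psi_2}_{V_0} + \frac{d_{KV}}{E}\iprod{\phi_2}{\psi_2}_{V_0} + d_v\iprod{\phi_2}{\psi_2}_\gw,
}
which is the representation I would work from throughout.

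For boundedness I would estimate the four terms separately by Cauchy--Schwarz. The first three are immediate, e.g. $\abs{\iprod{\phi_1}{\psi_2}_{V_0}} \leq \norm{\phi_1}_{V_0}\norm{\psi_2}_{V_0} \leq \norm{\phi}_V\norm{\psi}_V$, and likewise for the $\iprod{\phi_2}{\psi_1}_{V_0}$ and $\iprod{\phi_2}{\psi_2}_{V_0}$ terms. For the viscous term I would invoke the norm equivalence $\norm{\cdot}_{V_0}\simeq\norm{\cdot}_{H^2_\gw}$ together with the trivial embedding $H^2_\gw(-1,1)\hookrightarrow \Lp[2]_\gw(-1,1)$ to obtain $\norm{\phi_2}_\gw\leq c\norm{\phi_2}_{V_0}$, so that $d_v\abs{\iprod{\phi_2}{\psi_2}_\gw}\leq d_v c^2\norm{\phi}_V\norm{\psi}_V$. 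Summing the four bounds produces the constant $q_1$.

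For coercivity I would set $\psi=\phi$ and exploit conjugate symmetry: because $\iprod{\phi_1}{\phi_2}_{V_0} = \overline{\iprod{\phi_2}{\phi_1}_{V_0}}$, the combination $-\iprod{\phi_2}{\phi_1}_{V_0} + \iprod{\phi_1}{\phi_2}_{V_0}$ is purely imaginary and vanishes on taking real parts, leaving
\eq{
\re a(\phi,\phi) = \frac{d_{KV}}{E}\norm{\phi_2}_{V_0}^2 + d_v\norm{\phi_2}_\gw^2.
}
This controls $\phi_2$ in $V_0$ but says nothing about $\phi_1$, so the G{\aa}rding term $-\gl_0\norm{\phi}_X^2$ must supply the missing control. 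The key observation is that $\norm{\phi}_V^2 - \norm{\phi}_X^2 = \norm{\phi_2}_{V_0}^2 - \norm{\phi_2}_\gw^2$, i.e. the $\phi_1$-contributions cancel; with the choice $q_2=\gl_0=d_{KV}/E$ the desired inequality reduces to $(d_v + d_{KV}/E)\norm{\phi_2}_\gw^2\geq 0$, which holds since $d_v\geq 0$.

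The genuinely delicate point lies not in this computation but in the cited input: the bilinear expression $\iprod{\phi''}{(\gw\psi)''}_{\Lp[2]}$ is not manifestly symmetric or positive, since the Chebyshev weight $\gw$ enters only the second factor. Establishing that it nonetheless defines an inner product on $V_0$ whose induced norm is equivalent to the $H^2_\gw$-norm is the real obstacle, and I would take it wholesale from \citel{Mad90}{Lem.~5.1}. Once that equivalence is granted, the boundedness estimates and the cancellation of the cross terms are routine, and the lemma follows.
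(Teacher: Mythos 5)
Your proposal is correct and follows essentially the same route as the paper's proof: rewrite $a(\cdot,\cdot)$ entirely in terms of the $V_0$ inner product from \citel{Mad90}{Lem.~5.1}, bound each term by Cauchy--Schwarz together with the estimate $\norm{\phi_2}_\gw\leq c\norm{\phi_2}_{V_0}$, and for coercivity use the cancellation of the skew cross terms to arrive at $q_2=\gl_0=d_{KV}/E$. The only cosmetic difference is that the paper obtains $\norm{f}_\gw\leq\gb\norm{f}_{V_0}$ directly from Maday's lemma rather than via the $H^2_\gw$-norm equivalence, and presents the coercivity bound by writing $\norm{\phi_2}_{V_0}^2=\norm{\phi}_V^2-\norm{\phi_1}_{V_0}^2$ instead of your (equivalent) observation that $\norm{\phi}_V^2-\norm{\phi}_X^2=\norm{\phi_2}_{V_0}^2-\norm{\phi_2}_\gw^2$.
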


\begin{proof}
By~\citel{Mad90}{Lem.~5.1} 
there exists 
$\gb>0$ such that $\norm{f}_\gw 
\leq \gb \norm{f}_{V_0}$
 for all $f\in V_0$.
Let
$\phi=(\phi_1,\phi_2)^T\in V$ and $\psi=(\psi_1,\psi_2)^T\in V$ be arbitrary. 
Since $\abs{\iprod{\phi_k}{\psi_j}_{V_0}}\leq \norm{\phi_k}_{V_0}\norm{\psi_j}_{V_0}\leq \norm{\phi}_V\norm{\psi}_V$ for $k,j\in \set{1,2}$, we have
\eq{
    \abs{a(\phi,\psi)} 
&= \bigl|-\iprod{\phi_2}{\psi_1}_{V_0} 
+ \iprod{ \phi_1}{\psi_2}_{V_0} 
+\frac{d_{KV}}{E} \iprod{\phi_2}{\psi_2}_{V_0} 
   + d_v \iprod{\phi_2}{\psi_2}_\gw \bigr| \\
&\leq \left( 2+\frac{d_{KV}}{E} \right) \norm{\phi}_V \norm{\psi}_V + d_v \norm{\phi_2}_\gw \norm{\psi_2}_\gw\\
&\leq \left( 2+\frac{d_{KV}}{E}  + d_v\gb^2\right) \norm{\phi}_V \norm{\psi}_V .
}
Thus the first claim holds with $q_1=2+d_{KV}/E+d_v\gb^2$. 
Moreover, the definitions of $\norm{\cdot}_V$ and $\norm{\cdot}_X$ imply
\eq{
    \re a(\phi,\phi)
&=  \frac{d_{KV}}{E} \iprod{\phi_2}{\phi_2}_{V_0} + d_v \iprod{\phi_2}{\phi_2}_\gw \\
&=  \frac{d_{KV}}{E} \norm{\phi}_V^2 - \frac{d_{KV}}{E} \norm{\phi_1}_{V_0}^2 + d_v \norm{\phi_2}_\gw^2\\
&\geq  \frac{d_{KV}}{E} \norm{\phi}_V^2 - \frac{d_{KV}}{E} \left(\norm{\phi_1}_{V_0}^2 +  \norm{\phi_2}_\gw^2\right)
}
and thus the second claim holds with $q_2=\gl_0=\frac{d_{KV}}{E}$.
\end{proof}

  The Galerkin approximation is defined by constructing a sequence of approximating finite-dimensional subspaces $V^N$ of $V$. The approximating subspaces are required to have the property that (see~\citel{Mor94}{Sec.~5.2}) 
\textit{any element $\phi\in V$ can be approximated by elements in $V^N$ in the norm on $V$, i.e.,
}
\eq{
  \forall \phi \in V\,\exists (\phi^N)_N, \,\phi^N\in V^N: \quad \norm{\phi^N-\phi}_V \stackrel{N\to\infty}{\longrightarrow} 0 .
}
The matrix $A^N:~V^N \to V^N$ is defined 
by restricting $a(\cdot,\cdot)$ to $V^N \times V^N$, i.e., 
  \eq{
    \langle -A^N \phi, \psi  \rangle = a (\phi, \psi) \quad \text{for all} \quad \phi, \psi \in V^N.
  }
  The input matrix $B^N \in \Lin(U, V^N )$ is defined by 
  \eq{
    \langle B^N u, \psi  \rangle = \iprod{u}{ B^* \psi } \quad \text{for all} \quad
    \psi \in V^N, 
  }
  and $C^N \in \Lin(V^N, Y)$ is the restriction of $C\in \Lin(X,Y)$ onto $V^N$. 
  Approximating (or even knowing!) $B_d\in \Lin(U_d,X)$ is not necessary in our controller design.

  In this paper we define the approximating subspaces $V^N=V_0^N\times V_0^N$ according to the \keyterm{spectral Galerkin method} in~\cite{She95}. 
  The basis functions $\phi_k$ of $V^N$ are defined to be linear combinations of the Chebyshev polynomials $T_k(\xi)$ 
  so that $\phi_k$
  satisfy the boundary conditions of $V=V_0\times V_0$.
  As shown in~\citel{She95}{Sec.~3.1}, 
  we can choose 
  \eq{
    V_0^N= \Span \set{\phi_0(\cdot),\ldots,\phi_{N-4}(\cdot)}
  }
  where
  for $k=\List[0]{N-4}$ we have
  \eq{
    \phi_k(\xi) = T_k(\xi) - \frac{2(k+2)}{k+3}T_{k+2}(\xi) + \frac{k+1}{k+3}T_{k+4}(\xi) .
  }
  Then $V_0^N=\setm{f\in\Span \set{T_0(\cdot),\ldots,T_N(\cdot)}}{f(\pm 1)=f' (\pm1)=0}$.
In the Galerkin approximation the solution $v(\xi,t)$ of~\eqref{eq:EBmodel} is approximated with 
\eq{
  v_N(\xi,t)=\sum_{k=0}^{N-4}\ga_k(t)\phi_k(\xi).
}
The matrices of the approximate system $(A_N,B_N,C_N)$ are then derived from the system of ordinary differential equations which are obtained from the second order weak form~\eqref{eq:weakform2ndorder} with $\psi_2=\phi_l$ for $l\in \List[0]{N-4}$.
The resulting system is
\eq{
  \MoveEqLeft\sum_{k=0}^{N-4}
  \iprod{\phi_k}{\phi_l}_\gw (\ddot{\ga}_k(t)+d_v\dot{\ga}_k(t))\\
    &\quad +
  \sum_{k=0}^{N-4}
  \iprod{\phi_k''}{(\gw\phi_l)''}_{\Lp[2]} (EI\ga_k(t)+d_{KV}I\dot{\ga}_k(t))\\
    &= \iprod{b_1}{\phi_l}_\gw u_1(t)
    + \iprod{b_2}{\phi_l}_\gw u_2(t)
}
for $l\in \List[0]{N-4}$.
The output matrix $C_N$ defined by
\eq{
  y(t)
  =\pmat{v_N(\xi_1,t)\\v_N(\xi_2,t)}
 = \sum_{k=0}^{N-4} \pmat{\phi_k(\xi_1)\\\phi_k(\xi_2)}\ga_k(t).
}
Setting $\bm{\ga}(t)=(\ga_0(t),\ldots,\ga_{N-4}(t))^T$, we have 
\eq{
\MoveEqLeft M \ddot{\bm{\ga}}(t) + EI\cdot F \bm{\ga}(t) + (d_{KV} F+d_v M)\dot{\bm{\ga}}(t) = B_0^N u(t)\\
y(t)&=C_0^N \bm{\ga}(t),
}
where $M = (\iprod{\phi_k}{\phi_l}_\gw)_{lk}\in \R^{(N-3)\times (N-3)}$ and $F=(\iprod{\phi_k''}{(\gw\phi_l)''}_{\Lp[2]})_{lk}\in \R^{(N-3)\times (N-3)}$.
The exact values of the inner products 
$M_{lk}:=\iprod{\phi_k}{\phi_l}_\gw$ and
$F_{lk}:=\iprod{\phi_k''}{(\gw\phi_l)''}_{\Lp[2]}$ 
are given
in Lemma~\ref{lem:GalBasisMatrices} below.
The values 
$\iprod{b_1}{\phi_l}_\gw $ and
$\iprod{b_2}{\phi_l}_\gw $ can be computed
based on the (truncated) Chebyshev series expansions of $b_1$ and $b_2$. Indeed, if $b_j(\xi)=\sum_{k=0}^\infty q_k^jT_k(\xi)$, then the orthogonality of the basis $\set{T_k}_{k=0}^\infty$ with respect to the inner product $\iprod{\cdot}{\cdot}_\gw$ implies that for $j=1,2$ and $l\in \List[0]{N-4}$ we have
\eq{
  \iprod{b_j}{\phi_l}_\gw 
  &=\iprod{b_j}{T_l}_\gw 
  -\frac{2(l+2)}{l+3}\iprod{b_j}{T_{l+2}}_\gw 
  +\frac{l+1}{l+3}\iprod{b_j}{T_{l+4}}_\gw \\
  &=\norm{T_l}_\gw^2 q_l^j
  -\frac{2(l+2)}{l+3}\norm{T_{l+2}}_\gw^2 q_{l+2}^j 
  +\frac{l+1}{l+3}\norm{T_{l+4}}_\gw^2 q_{l+4}^j .
}
Here 
$\norm{T_0}_\gw^2 = \pi$
and $\norm{T_l}_\gw^2 = \pi/2$ for $l\geq 1$.
In Matlab the required Chebyshev coefficients $\set{q_l^j}_{l=0}^N$, $j=1,2$, of $b_1$ and $b_2$ are readily available using \textbf{Chebfun}.
Finally, based on the formula for $\phi_k(\cdot)$, the matrix $C_0^N$ has the form $C_0^N=\pmat{\bm{c}_0,\ldots,\bm{c}_{N-4}}$ where 
\eq{
  \bm{c}_k =
  \pmat{
    T_k(\xi_1)\\T_k(\xi_2)}
- \frac{2(k+2)}{k+3}
 \pmat{
   T_{k+2}(\xi_1)\\
   T_{k+2}(\xi_2)
 }
 + \frac{k+1}{k+3}
 \pmat{
   T_{k+4}(\xi_1)\\
   T_{k+4}(\xi_2)
 }.
}

\begin{lemma}\textup{\citel{She95}{Lem.~3.1}}
  \label{lem:GalBasisMatrices}
  The nonzero components $M_{lk}$,
  $l,k\in \List[0]{N-4}$,
  of $M\in \R^{(N-3)\times (N-3)}$
  are $M_{00}=35\pi /18$,
  \eq{
M_{ll} &= \frac{\pi\left[(l+1)^2+4(l+2)^2+(l+3)^2\right]}{2(l+3)^2}, \quad l\geq 1\\
M_{l,l+2}&=M_{l+2,l} = -  \frac{\pi\left[(l+2)(l+5)+(l+1)(l+4)\right]}{(l+3)(l+5)} \\
    M_{l,l+4}&=M_{l+4,l} = \frac{\pi(l+1)}{2(l+3)}.
  }
  The nonzero elements $F_{lk}$,
  $l,k\in \List[0]{N-4}$,
  of $F\in \R^{(N-3)\times (N-3)}$ are
\eq{
    F_{ll}&= 8(l+1)^2(l+2)(l+4)\pi\\
    F_{lk}&= \frac{8\pi (l+1)(l+2)\left[l(l+4)+3(k+2)^2  \right]}{k+3}, 
  }
    for $ k=l+2,l+4,\ldots$.
\end{lemma}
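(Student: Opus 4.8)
The plan is to reduce every entry to the orthogonality of the Chebyshev polynomials in $\Lp[2]_\gw(-1,1)$. Writing $\phi_k=\sum_j c_{kj}T_j$ with the three nonzero coefficients $c_{k,k}=1$, $c_{k,k+2}=-\frac{2(k+2)}{k+3}$ and $c_{k,k+4}=\frac{k+1}{k+3}$, and using $\iprod{T_i}{T_j}_\gw=0$ for $i\neq j$ together with $\norm{T_0}_\gw^2=\pi$ and $\norm{T_j}_\gw^2=\pi/2$ for $j\geq 1$, each matrix entry collapses to a finite sum over the Chebyshev modes common to the two basis functions.

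For $M$ I would first write $M_{lk}=\iprod{\phi_k}{\phi_l}_\gw=\sum_j c_{kj}c_{lj}\norm{T_j}_\gw^2$. This vanishes unless the index sets $\set{l,l+2,l+4}$ and $\set{k,k+2,k+4}$ intersect, which forces $k-l\in\set{0,\pm2,\pm4}$ with $k\equiv l\pmod 2$ and already explains why $M$ is banded. I would then treat the three cases $k=l$, $k=l+2$, $k=l+4$ by collecting the overlapping terms and simplifying the resulting rational expressions; symmetry $M_{lk}=M_{kl}$ is automatic from symmetry of $\iprod{\cdot}{\cdot}_\gw$. The only special entry is $M_{00}$, where $T_0$ carries the larger weight $\norm{T_0}_\gw^2=\pi$: substituting $c_{0,0}=1$, $c_{0,2}=-\tfrac43$, $c_{0,4}=\tfrac13$ gives $M_{00}=\pi+\tfrac{16}{9}\cdot\tfrac\pi2+\tfrac19\cdot\tfrac\pi2=\tfrac{35\pi}{18}$, as claimed.

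For $F$ the weight sits inside a derivative, so I would first remove it by integrating by parts twice in $F_{lk}=\int_{-1}^1\phi_k''\,(\gw\phi_l)''\,d\xi$. The clamped conditions $\phi_l(\pm1)=\phi_l'(\pm1)=0$ give $\gw\phi_l=O((1-\xi^2)^{3/2})$ near the endpoints, so both $\gw\phi_l$ and $(\gw\phi_l)'$ vanish at $\pm1$ while $\phi_k''$ and $\phi_k'''$ stay bounded; hence all boundary terms drop and $F_{lk}=\iprod{\phi_k^{(4)}}{\phi_l}_\gw$. Since $\deg\phi_k^{(4)}=k$ and $\phi_k^{(4)}$ contains only modes $T_j$ with $j\leq k$ and $j\equiv k\pmod 2$, orthogonality immediately yields $F_{lk}=0$ for $k<l$ and for $k\not\equiv l$, which is exactly the upper-triangular and parity structure of the claim; note that $F$ is genuinely non-symmetric because $\gw$ is attached only to $\phi_l$. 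To finish I would insert the explicit expansion $\phi_k^{(4)}=\sum_j d_{kj}T_j$, obtained by iterating four times the standard recursion $\bar c_n a^{(1)}_n=a^{(1)}_{n+2}+2(n+1)a_{n+1}$ (with $\bar c_0=2$ and $\bar c_n=1$ for $n\geq 1$) that links the Chebyshev coefficients of a derivative to those of the function, and then evaluate $F_{lk}=\sum_j d_{kj}c_{lj}\norm{T_j}_\gw^2$ over the overlap $j\in\set{l,l+2,l+4}$.

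I expect the $F$-computation to be the main obstacle: producing the coefficients $d_{kj}$ of $\phi_k^{(4)}$ and then carrying out the algebra that collapses the several contributions into the compact forms $F_{ll}=8(l+1)^2(l+2)(l+4)\pi$ and $F_{lk}=\frac{8\pi(l+1)(l+2)[l(l+4)+3(k+2)^2]}{k+3}$ demands substantial (though routine) simplification, and the diagonal and off-diagonal cases must be handled separately because the number of surviving overlap terms differs. Since the statement coincides with \citel{She95}{Lem.~3.1}, one may alternatively simply quote that reference; the derivation above is the route I would take to verify it independently.
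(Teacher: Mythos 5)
Your proposal is correct. The paper gives no proof of this lemma at all---it simply cites \citel{She95}{Lem.~3.1}---and your verification route (Chebyshev orthogonality collapsing $M_{lk}=\sum_j c_{kj}c_{lj}\norm{T_j}_\gw^2$ to the three overlap cases, and for $F$ the double integration by parts reducing $\iprod{\phi_k''}{(\gw\phi_l)''}_{\Lp[2]}$ to $\iprod{\phi_k^{(4)}}{\phi_l}_\gw$ followed by the Chebyshev derivative recursion) is essentially how the cited result is established in Shen's paper; your observation that the boundary terms vanish because $\gw\phi_l=O((1-\xi^2)^{3/2})$ near $\pm 1$ is precisely the step needed to reconcile the paper's weak-form definition of $F$ with Shen's statement in terms of $\iprod{\phi_k^{(4)}}{\phi_l}_\gw$, and it is sound.
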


In summary, the matrices $(A_N,B_N,C_N)$ of the Galerkin approximation are given by
$C_N=\pmat{C_0^N,~0}$ and
\eq{
  A_N = \pmat{0&I_{N\times N}\\-EI M\inv F& \hspace{.2cm}-d_{KV}M\inv F - d_vI \cdot I_{N\times N}}, \quad
  B_N = \pmat{0\\M\inv B_0^N}.
}

  \subsection{The Reduced Order Controller Design Algorithm}
  \label{sec:ContrAlgorithm}

  The following algorithm from~\citel{PauPha20}{Sec.~III.A} determines the parameters of the controller~\eqref{eq:FinConObs} so that 
  $(G_1,G_2)$ are as in \textbf{Step~1}, $K_1^N$ is as in \textbf{Step~3}, and $(A_L^r,B_L^r,L^r,K_2^N)$ are as in \textbf{Step~4}.  
The parts $G_1,G_2,K_1^N$ constitute \keyterm{the internal model} of the controller, and its construction is based on the fact that the system~\eqref{eq:EBmodel} has two outputs, i.e., $Y=\C^2$.

\smallskip

\noindent\textbf{PART I. The Internal Model}

\smallskip

\noindent \textbf{Step 1:}
Choose $Z_0 = Y^{2q+1} = \C^{4q+2}$,
  \eq{
    G_1 &= \diag(0_2, \Omega_1,\ldots, \Omega_q)\in \Lin(Z_0)\\
    G_2&= \pmat{I_2,I_2,0_2,\ldots,I_2,0_2}^T\in \Lin(Y,Z_0),
  }
  where
  $\Omega_k = \pmatsmall{0_2&\gw_k I_2\\-\gw_k I_2&0_2}$ for all $k\in \List{q}$ and
  $0_2$ and $I_2$ are the $2\times 2$ zero and identity matrices.
The pair $(G_1,G_2)$ is controllable by construction.

\smallskip

\noindent \textbf{PART II. The Galerkin Approximation and Stabilization}.

\smallskip

\noindent \textbf{Step 2:}
For a sufficiently large $N\in\N$, form a Galerkin approximation  $(A^N,B^N,C^N)$  on $V^N$ of the control system~\eqref{eq:EBmodel} as described 
in Section~\ref{sec:GalerkinApprox}.

\smallskip
\noindent \textbf{Step 3:}
Choose parameters $\ga_1,\ga_2\geq 0$, $Q_1\in \Lin(U_0,X)$, and $Q_2\in \Lin(X,Y_0)$ with $U_0,Y_0$ Hilbert spaces in such a way that the systems
$(A + \ga_1 I,Q_1,C)$  and $(A+\ga_2 I,B,Q_2)$
are exponentially stabilizable and detectable.
Let $Q_1^N$ and $Q_2^N$ be the approximations of $Q_1$ and $Q_2$, respectively, according to the approximation $V^N$ of $V$.
Let $Q_0\in \Lin(Z_0,\C^{p_0})$ be such that $(Q_0,G_1)$ is observable, and let $R_1\in \Lin(Y)$ and $R_2\in \Lin(U)$ be positive definite matrices.
Denote
\eq{
  \Acomp^N = \pmat{G_1&G_2C^N\\0&A^N}, \; \Bcomp^N=\pmat{G_2D\\B^N}, \; \Qcomp^N = \pmat{Q_0&0\\0&Q_2^N}.
}
Define
\eq{
L^N &=-\Sigma_N C^N R_1\inv\in \Lin(Y, V^N) \\
K^N &= \pmat{K_1^N,\; K_2^N} =-R_2\inv (\Bcomp^N)^\ast\Pi_N \in \Lin(Z_0\times V^N ,U )
}
where
$\Sigma_N$
and
$\Pi_N$
are the non-negative solutions of the finite-dimensional Riccati equations
\eq{
  (A^N + \ga_1 I) \Sigma_N + \Sigma_N (A^N + \ga_1 I)^* 
  - \Sigma_N \left(C^N \right)^* R_1\inv C^N \Sigma_N &=- Q_1^N (Q_1^N)^*  \\
  (\Acomp^N + \ga_2 I)^* \Pi_N + \Pi_N (\Acomp^N +\ga_2 I) 
  - \Pi_N \Bcomp^N R_2\inv\left(\Bcomp^N\right)^\ast \Pi_N &=- 
  \left(\Qcomp^N\right)^\ast \Qcomp^N.  
}
With these choices $\Acomp^N+\Bcomp^NK^N$ and $A^N+L^NC^N$ are Hurwitz if $N$ is sufficiently large~\citel{BanIto97}{Thm. 4.8}.

\smallskip

\noindent\textbf{PART III.} \textbf{The Model Reduction} 

\smallskip

\noindent \textbf{Step 4:}
  For a fixed and suitably large $r\in\N$, $r\leq N$,
  apply the Balanced Truncation method to the stable finite-dimensional system
  \eq{
    (A^N+L^NC^N, [ B^N+L^N D,\;L^N],K^N_2)
  }
to obtain a stable $r$-dimensional reduced order system
\eq{
  \left(A_L^r,[B_L^r ,\; L^r] ,K_2^r\right).
}

\section{Numerical Simulations}

The simulation codes can be downloaded from GitHub at the address {\url{https://github.com/lassipau/MTNS20-Matlab-simulations}}.\\
The controller is designed for a beam~\eqref{eq:EBmodel} with parameters
\eq{
  E = 10, \quad I = 1, \quad d_{KV} = 0.01, \quad d_v = 0.4.
}
The pointwise measurements of the defection are at $\xi_1 = -0.6$ and $\xi_2 = 0.3$.
The profile functions for the control $u(t) = (u_1(t),u_2(t))^T$ and disturbance $\wdist(t)\in\R$ are 
\eq{
  b_1(\xi) &= \frac{1}{3}(\xi+1)^2(1-\xi)^6, \quad~
  b_2(\xi) = \frac{1}{3}(\xi+1)^6(1-\xi)^2\\
  b_d(\xi) &= \frac{1}{3}(\xi+1)^2(1-\xi)^2.
}

Our goal is to design a controller which is capable of tracking and rejecting continuous $2$-periodic signals $\yref(t)$ and $\wdist(t)$. To this end, we will choose the frequencies $0=\gw_0<\gw_1<\ldots<\gw_q$ to be $\gw_k=k\pi$ for $k\in \List[0]{q}$ for some suitable value $q\in\N$, and we use $q=10$ in our simulations. Due to the robustness of the controller, for any continuous $2$-periodic reference signal $\yref^{per}(t)$ the controller will track the truncated part $\yref(t)$ of the form~\eqref{eq:yrefwdist} with perfect accuracy, and the remaining part $\yref(t)-\yref^{per}(t)$ will appear as an additional external disturbance in the control loop. However, due to the stability of the closed-loop system, the effect of this difference signal on the asymptotic regulation error $e(t)$ is guaranteed to be small as long as the quantity $\max_{t\in[0,2]}\norm{\yref(t)-\yref^{per}(t)}$ is sufficiently small. 
The smallness of the truncation error can be guaranteed for the most important continuous $2$-periodic functions $\yref^{per}(\cdot)$ (outside pathological situations) by choosing a suitably large $q\in\N$. In the simulations we demonstrate the above property of the controller in the tracking of a triangle signal which contains an infinite number of frequency components.

The spectral Galerkin approximation used in the controller design is completed with $\dim V_0^N=39$, in which case the dimension of the approximate first order system $(A^N,B^N,C^N)$ is $78$.
The uncontrolled system is exponentially stable and it has a stability margin which is approximately $0.35$. 
In \textbf{Step~3} of controller design we choose the parameters as $\ga_1=2$, $\ga_2=0.8$ and $R_1=R_2=I$, $Q_0=I$, $Q_1=I$ and $Q_2=I$.  
The order $r\leq N$ of the model reduction in \textbf{Step~4} of the controller design can be chosen to be as low as $r=4$ while 
still achieving exponential closed-loop stability. Using such a low-dimensional ``observer-part'' in the controller is made possible by the fact that the uncontrolled system already has quite strong stability properties.
The resulting closed-loop system has a stability margin 
$\approx 1.01$.

Since the internal model in the controller has dimension $2(2q+1)=42$, the full size of the controller is $\dim Z=46$.
For simulations we approximate the controlled beam system~\eqref{eq:EBmodel} with a separate higher-dimensional spectral Galerkin approximation with $\dim V_0^N=69$.
Figure~\ref{fig:ROMCLeigvals} plots some of the eigenvalues of the closed-loop system.

\begin{figure}[h!]
  \centering
  \includegraphics[width=0.7\linewidth]{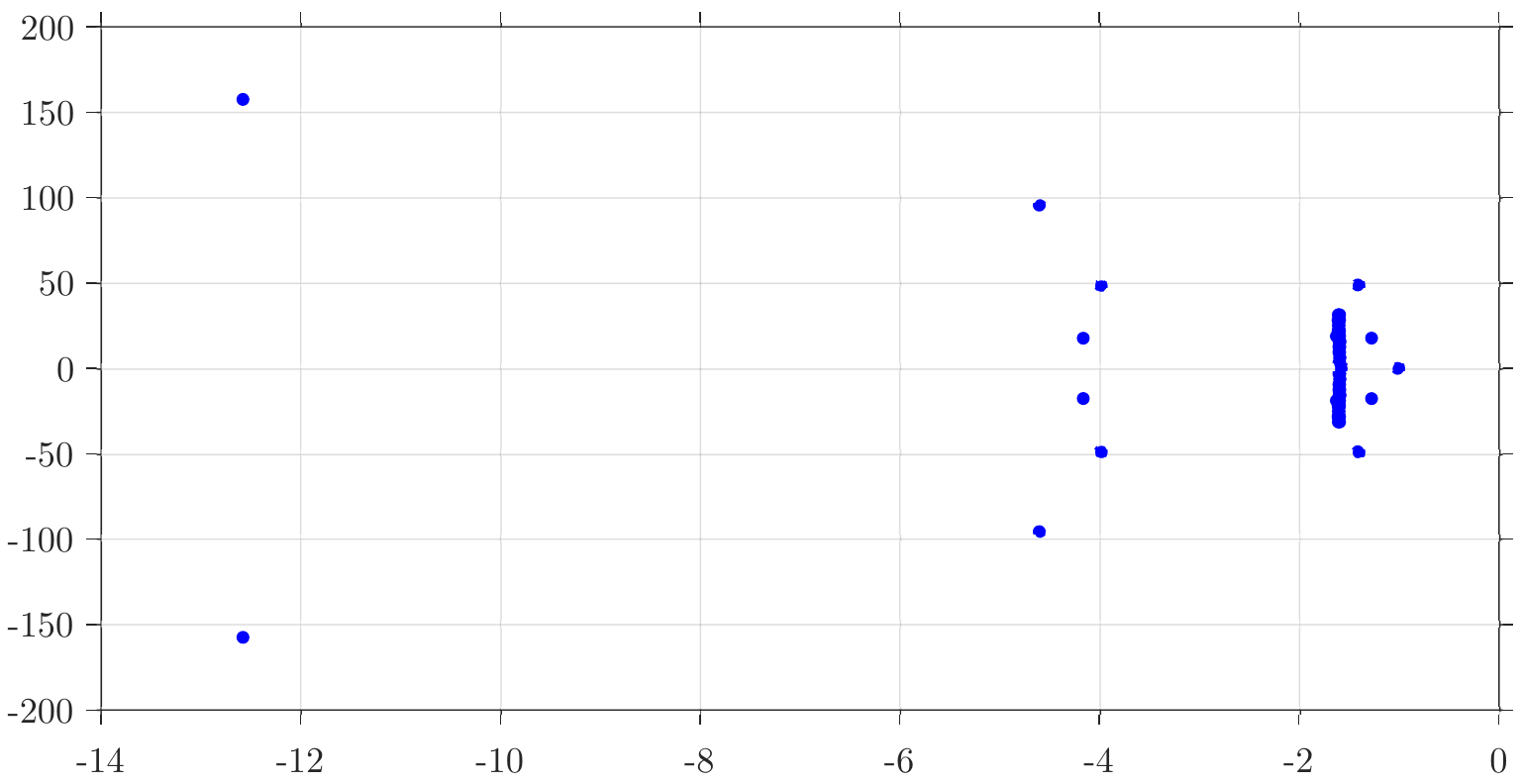}
  \caption{Eigenvalues of the closed-loop system.}
  \label{fig:ROMCLeigvals}
\end{figure}

The controller design algorithm in Section~\ref{sec:ContrAlgorithm} requires that the system does not have transmission zeros at the (complex) frequencies $\set{i\gw_k}_{k=0}^q\subset i\R$ of the reference and disturbance signals. This property can be tested numerically using the Galerkin approximation of~\eqref{eq:EBmodel} and it is satisfied in our simulations.

The designed internal model based controller is capable of tracking any reference signal $\yref(t)$ and any disturbance signal $\wdist(t)$ of the form~\eqref{eq:yrefwdist} with frequencies $(\gw_k)_{k=0}^{10}$ with arbitrary unknown amplitudes and phases. As explained above, the controller will also \keyterm{approximately} track and reject any continuous $2$-periodic reference and disturbance signals. Figure~\ref{fig:ROMoutput} shows the output $y(t)$ and the tracking error in the case where the first component of $\yref(t)$ is a $2$-periodic triangle signal, the second component of $\yref(t)$ is identically zero, and the disturbance signal is $\wdist(t)=\sin(\pi t)+0.4\cos(3\pi t)$. 
The initial deflection $v_0(\xi)$, the initial velocity $v_1(\xi)$ and the initial state of the controller are all zero.

\begin{figure}[h!]
  \centering
  \includegraphics[width=0.65\linewidth]{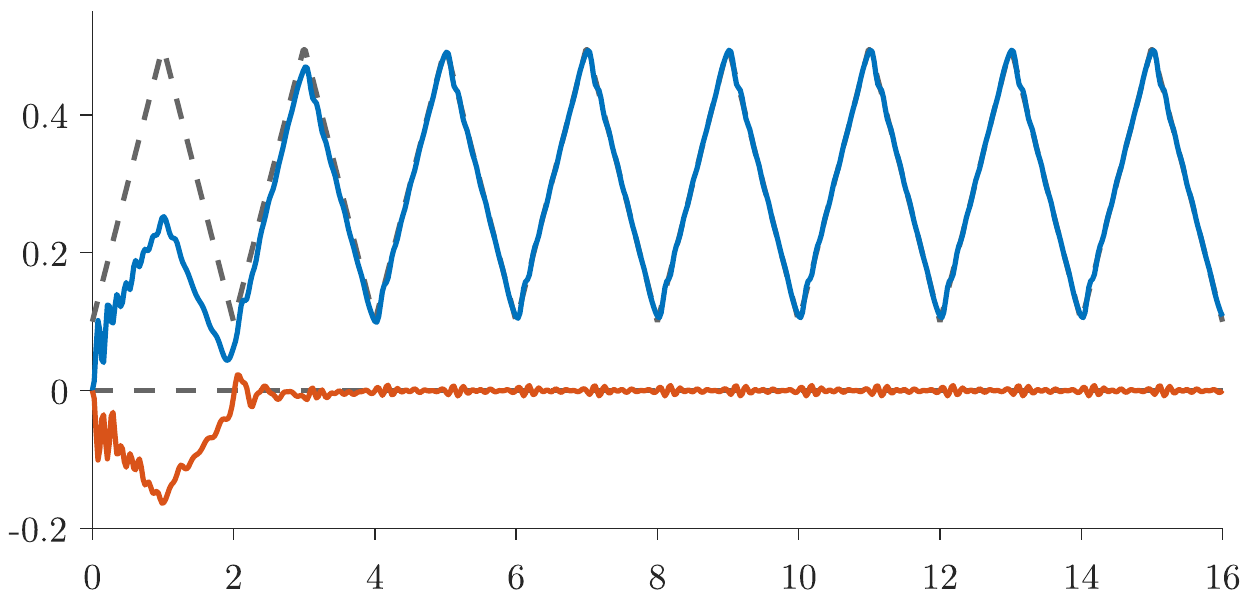}
  \caption{The output $y(t)$ and the reference signal $\yref(t)$.}
  \label{fig:ROMoutput}
\end{figure}

Figure~\ref{fig:ROMerror} plots the norm of the tracking error $\norm{e(t)}=\norm{y(t)-\yref(t)}_{\R^2}$. The asymptotic residual error is due to the fact that the internal model contains only a finite number of frequencies and is therefore not capable of tracking the nonsmooth triangle signal with perfect accuracy.

\begin{figure}[h!]
  \centering
  \includegraphics[width=0.65\linewidth]{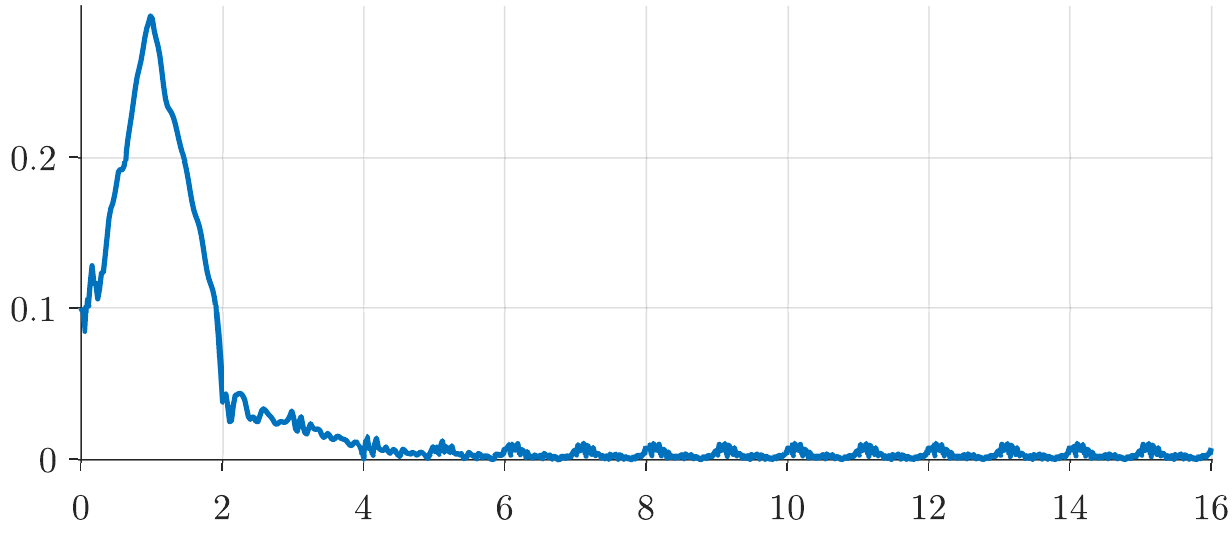}
  \caption{The norm $\norm{e(t)}$ of the tracking error.}
  \label{fig:ROMerror}
\end{figure}

Figure~\ref{fig:ROMcontrols} depicts the control actions $u_1(t)$ and $u_2(t)$ in the simulation. Finally, Figure~\ref{fig:ROMstate} plots the deflection $v(\xi,t)$ of the controlled beam.

\begin{figure}[h!]
  \centering
  \includegraphics[width=0.65\linewidth]{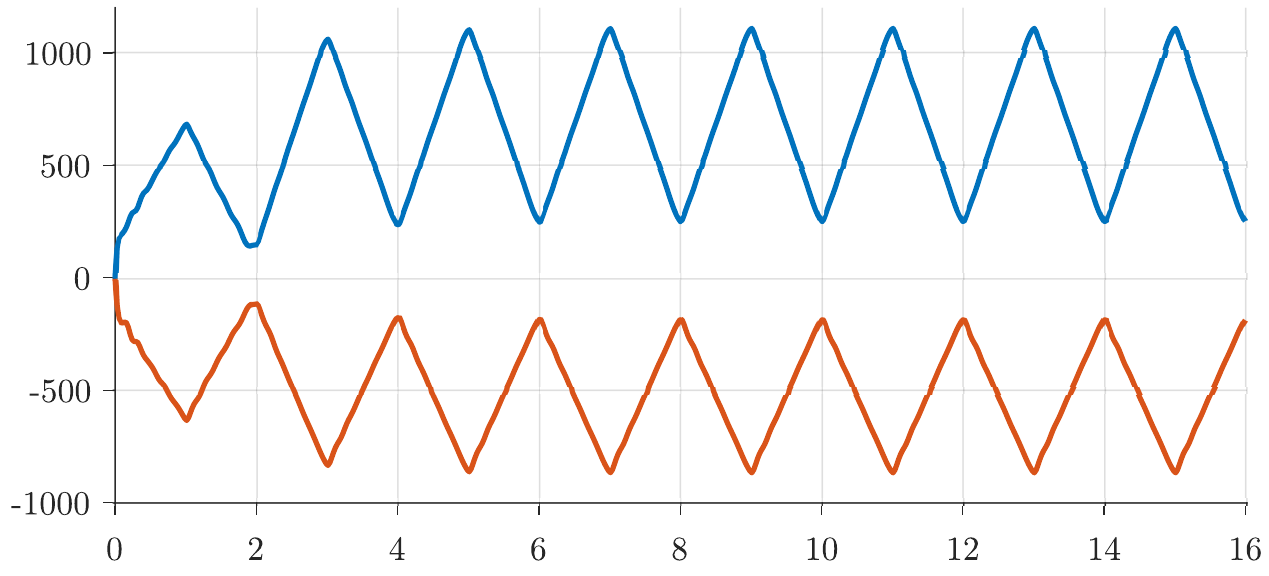}
  \caption{The control inputs $u_1(t)$ (blue) and $u_2(t)$ (red).}
  \label{fig:ROMcontrols}
\end{figure}

\begin{figure}[h!]
  \centering
  \includegraphics[width=0.8\linewidth]{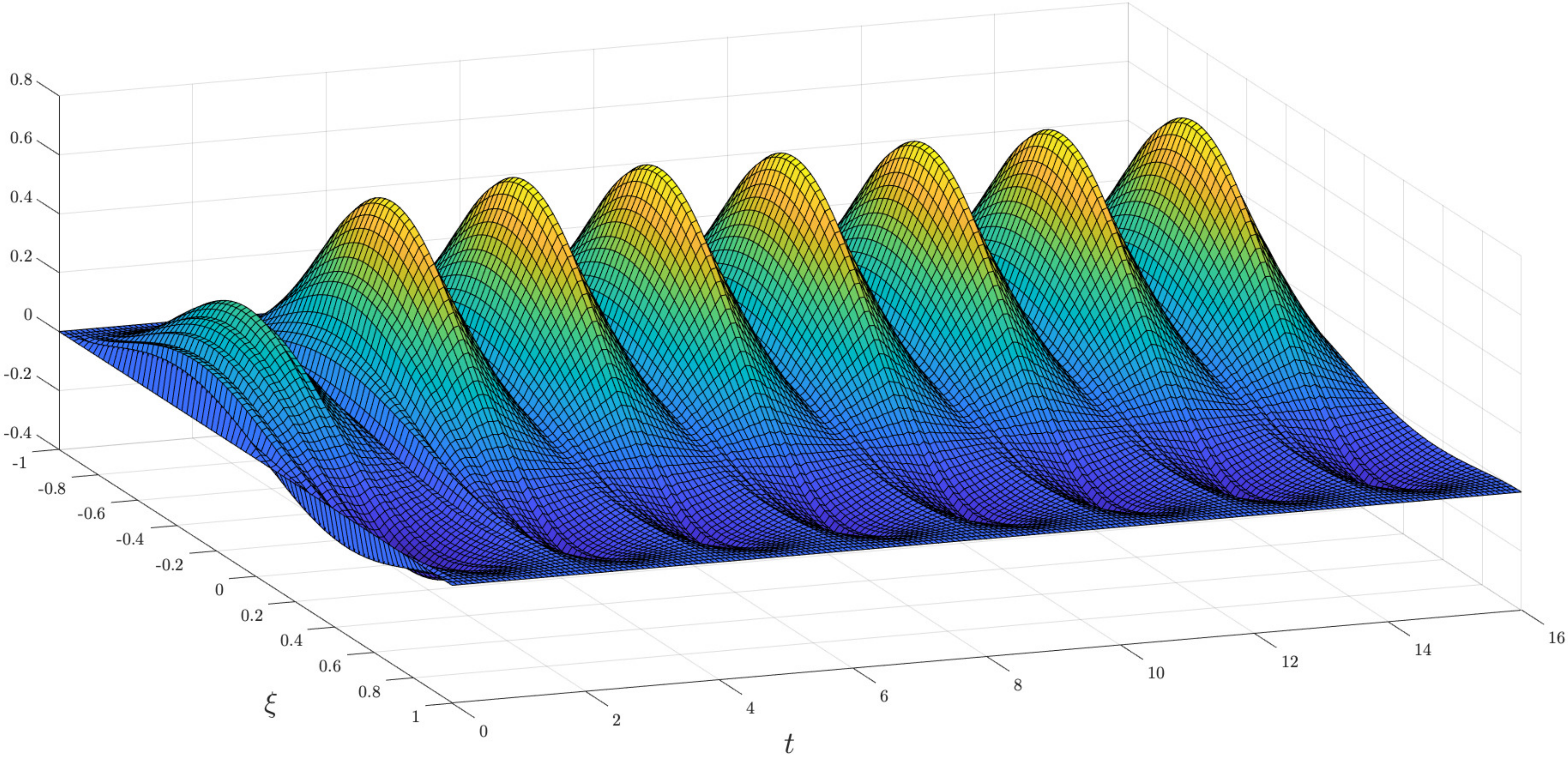}
  \caption{The deflection of the controlled beam.}
  \label{fig:ROMstate}
\end{figure}

\subsection{Comparison with a Low-Gain Controller}

Because the beam system is exponentially stable, the output tracking problem can alternatively be solved with a ``simple'' internal model based controller structure whose dynamics only contain the internal model, i.e., \eq{
  \dot{z}(t)&=G_1z(t)+G_2e(t), \qquad z(0)\in \R^{n_c},\\
  u(t)&=Kz(t),
}
where $G_1$ and $G_2$ are as in Section~\ref{sec:ContrAlgorithm}, and thus $n_c=2(2q+1)=42$. As shown in~\cite{HamPoh00,RebWei03} or~\citel{Pau16a}{Rem.~10}, 
 the matrix $K\in \R^{2\times n_c}$ can be chosen as 
\eq{
  K&=\eps \bigl[P(0)\inv,\re P(i\gw_1)\inv,\im P(i\gw_1)\inv,\ldots,
   \re P(i\gw_q)\inv,\im P(i\gw_q)\inv\bigr],
}
where $P(\gl)\in \C^{2\times 2}$ is the transfer function of the system~\eqref{eq:EBmodel}.
For this choice of $K$, there exists $\eps^\ast >0$ such that for any $0<\eps\leq \eps^\ast$ the controller achieves output tracking and disturbance rejection for the beam system. 
In the controller design the values $P(i\gw_k)$ can be reliably computed using a Galerkin approximation of the system due to the results in~\cite{Mor94}.

The low-gain parameter $\eps>0$ should be designed so that the closed-loop has the best possible stability margin, and this can be achieved using root locus type analysis. For the beam system~\eqref{eq:EBmodel} with the chosen parameters and the internal model $(G_1,G_2)$ with frequencies $\gw_k=k\pi$ for $k\in \List[0]{10}$, 
the stabilization of the 
closed-loop system
arising from this controller structure
 is very difficult  
 due to the high number of unstable frequencies in the internal model.
If the number of frequencies is reduced to $\gw_k=k\pi$ for $k\in \List[0]{5}$ (potentially resulting in lower accuracy of the tracking), 
the best achievable closed-loop stability margin is approximately
 $0.0382$
 with parameter $\eps=0.076$. The margin can be compared to the stability margin 
$\approx 1.01$
 achieved with the reduced order controller in Section~\ref{sec:ContrAlgorithm}. This smaller stability margin leads to a significantly slower rate of convergence of the output. Figure~\ref{fig:LGoutput} depicts the output of the controlled system with the low-gain controller with $\eps=0.076$.

\begin{figure}[h!]
  \centering
  \includegraphics[width=0.7\linewidth]{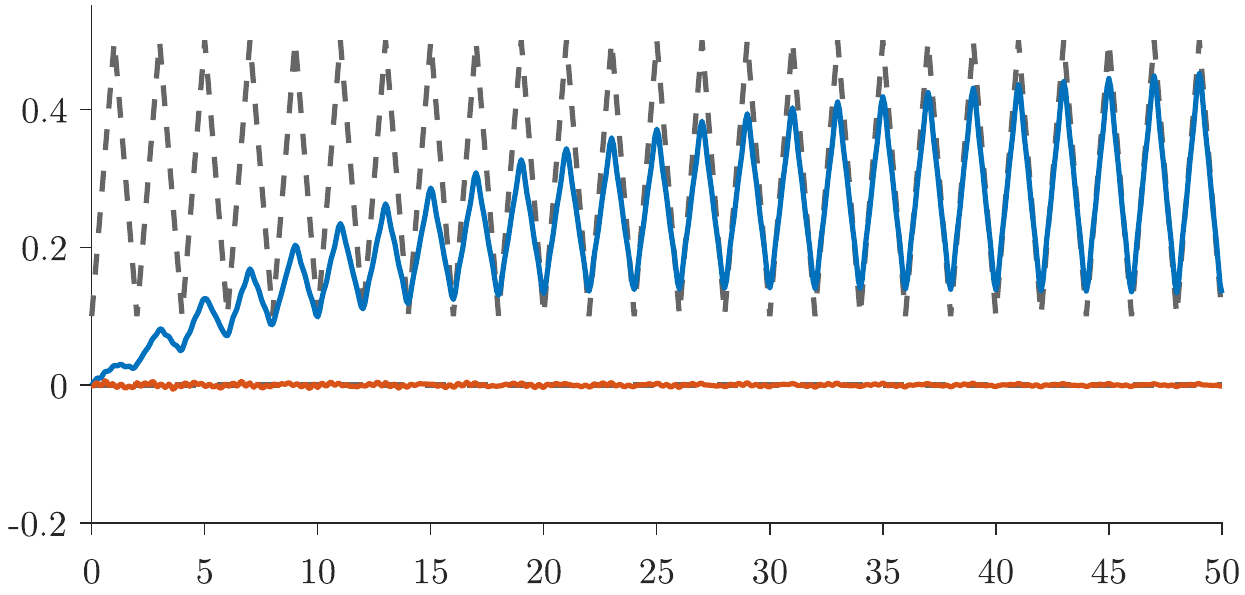}
  \caption{Output of the system with the low-gain controller.}
  \label{fig:LGoutput}
\end{figure}

Note that since in our main controller we were able to choose the size of the reduced order model in \textbf{Step~4} as $r=4$, the 
dimension of our reduced order controller is only higher by $4$ compared to the low-gain controller, but this additional structure of the controller allows the design of a significantly larger stability margin. 
Since the second controller is based on a low-gain design, it is natural to ask 
if using this controller leads to reduced control action.
However,
 in the case of a stable linear system with the same number of inputs and outputs
 the control action required to produce the desired asymptotic output $\yref(t)$
 does not 
 depend on the type of the internal model based controller
(see \citel{Pau17b}{Lem.~3.10 and proofs of Thm.~3.3 and Lem.~3.4}).
Therefore
 in our simulation 
the low-gain controller does not achieve 
output regulation with smaller control gains and 
the controller requires
 less control action only for small values of $t\geq 0$.  
The control inputs produced by the low-gain controller are depicted in Figure~\ref{fig:LGcontrols}.

\begin{figure}[h!]
  \centering
  \includegraphics[width=0.7\linewidth]{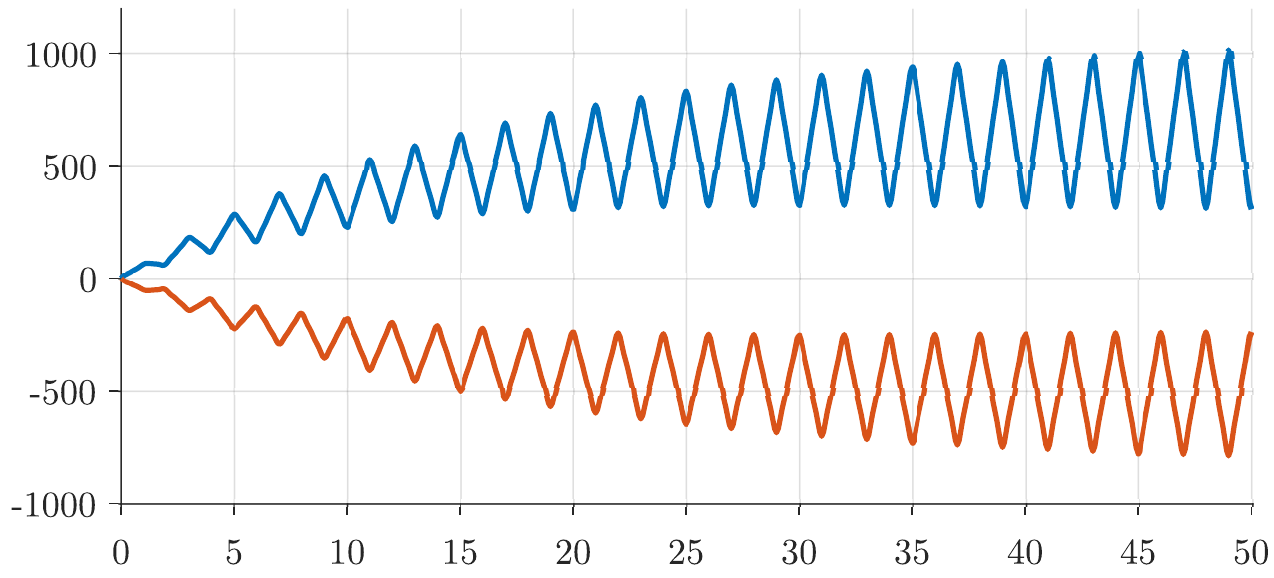}
  \caption{Control inputs $u_1(t)$ (blue) and $u_2(t)$ (red) produced by the low-gain controller.}
  \label{fig:LGcontrols}
\end{figure}

\end{document}